\newcommand{\remove}[1]{}
\newtheorem{thm}{Theorem}[section]
\newtheorem{claim}[thm]{Claim}
\newtheorem{lem}[thm]{Lemma}
\newtheorem{define}[thm]{Definition}
\newtheorem{cor}[thm]{Corollary}
\newtheorem{THM}{Theorem}
\def\F{{\mathbb{F}}}
\def\R{{\mathbb{R}}}
\def\cL{{\mathcal L}}
\def\_{\,\,\,\,\,}
\newcommand{\eps}{\epsilon}
\begin{document}

\title{On the size of Kakeya sets in finite fields}
\author{Zeev Dvir \thanks{Department of Computer Science, Weizmann
institute of science, Rehovot, Israel.  {\tt
zeev.dvir@weizmann.ac.il}. Research supported by Binational
Science Foundation (BSF) grant.}}
\date{}
\maketitle

\begin{abstract}
A Kakeya set is a subset of $\F^n$, where $\F$ is a finite field
of $q$ elements, that contains a line in every direction. In this
paper we show that the size of every Kakeya set is at least $C_{n}
\cdot q^{n}$, where $C_{n}$ depends only on $n$. This improves the
previously best lower bound for general $n$ of $\approx q^{4n/7}$.
\end{abstract}
\pagenumbering{arabic}

\section{Introduction}

Let $\F$ denote a finite field of $q$ elements. A {\sf Kakeya} set
(also called a {\sf Besicovitch} set) in $\F^n$ is a set $K
\subset \F^n$ such that $K$ contains a line in every direction.
More formally, $K$ is a Kakeya set if for every $x \in \F^n$ there
exists a point $y \in \F^n$ such that the line
\[ L_{y, x} \triangleq \{ y + a\cdot x | a \in \F \} \]
is contained in $K$.

The motivation for studying Kakeya sets over finite fields is to
try and understand better the more complicated questions regarding
Kakeya sets in $\R^n$. A Kakeya set $K \subset \R^n$ is a compact
set containing a line segment of unit length in every direction.
The famous Kakeya Conjecture states that such sets must have
Hausdorff (or Minkowski) dimension equal to $n$. The importance of
this conjecture is partially due to the connections it has to many
problems in harmonic analysis, number theory and PDE. This
conjecture was proved for $n=2$ \cite{Davies71} and is open for
larger values of $n$ (we refer the reader to the survey papers
\cite{Wolff99,Bou00,Tao01} for more information)

It was first suggested by Wolff \cite{Wolff99} to study finite
field Kakeya sets. It was asked in \cite{Wolff99} whether there
exists a lower bound of the form $C_n \cdot q^n$ on the size of
such sets in $\F^n$. The lower bound appearing in \cite{Wolff99}
was of the form $C_n \cdot q^{(n+2)/2}$. This bound was further
improved in \cite{Rogers01,BKT04,MT04,Tao08} both for general $n$
and for specific small values of $n$  (e.g for $n=3,4$). For
general $n$, the currently best lower bound is the one obtained in
\cite{Rogers01,MT04} (based on results from \cite{KT99}) of $C_n
\cdot q^{4n/7}$. The main technique used to show this bound is an
additive number theoretic lemma relating the sizes of different
sum sets of the form $A + r \cdot B$ where $A$ and $B$ are fixed
sets in $\F^n$ and $r$ ranges over several different values in
$\F$ (the idea to use additive number theory in the context of
Kakeya sets is due to Bourgain \cite{Bou99}).

The next theorem gives a near-optimal bound on the size of Kakeya
sets. Roughly speaking, the proof follows by observing that any
degree $q-2$ homogenous polynomial in $\F[x_1,\ldots,x_n]$ can be
`reconstructed' from its value on any Kakeya set $K \subset \F^n$.
This implies that the size of $K$ is at least the dimension of the
space of polynomials of degree $q-2$, which is $\approx q^{n-1}$
(when $q$ is large).
\begin{THM}\label{thm-Kakeya}
Let $K \subset \F^n$ be a Kakeya set. Then
\[ |K| \geq C_n \cdot q^{n-1}, \]
where $C_n$ depends only on $n$.
\end{THM}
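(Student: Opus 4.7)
The plan is to use the hint given just before the theorem statement: produce a low-degree homogeneous polynomial that vanishes on $K$, and then use the Kakeya property to force it to be identically zero. I would argue by contradiction, so first I fix the relevant polynomial space. Let $V_d$ denote the space of homogeneous polynomials of degree $d$ in $\F[x_1,\ldots,x_n]$; its dimension is $\binom{d+n-1}{n-1}$. Setting $d=q-2$ gives a space of dimension $\binom{q+n-3}{n-1}$, which is $\gtrsim q^{n-1}/(n-1)!$. If $|K|$ were strictly less than this dimension, then the evaluation map $V_{q-2}\to \F^{K}$, $P\mapsto P|_K$, would have nontrivial kernel, so there would exist a nonzero homogeneous polynomial $P$ of degree $q-2$ that vanishes on $K$.

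The next step is to exploit the Kakeya property line by line. Fix any direction $x\in \F^n$, and let $y\in \F^n$ be such that $L_{y,x}\subset K$. Consider the univariate polynomial $p(a)=P(y+ax)\in \F[a]$. Since $P$ has total degree $q-2$, $p$ has degree at most $q-2$; since $y+ax\in K$ for every $a\in \F$ and $P$ vanishes on $K$, $p$ has $q$ roots. A nonzero polynomial over $\F$ of degree $\leq q-2$ cannot have $q$ roots, so $p\equiv 0$. Now I would identify the leading coefficient: because $P$ is homogeneous of degree $q-2$, the coefficient of $a^{q-2}$ in the expansion of $P(y+ax)$ is exactly $P(x)$ (the only way to get $a^{q-2}$ is to pick $ax$ in every factor). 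Hence $P(x)=0$ for every $x\in \F^n$.

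To conclude, I use the fact that a polynomial in $n$ variables over $\F$ of total degree strictly less than $q$ that vanishes on all of $\F^n$ must be the zero polynomial. This is an easy induction on $n$: the base case $n=1$ is that a nonzero univariate polynomial of degree $<q$ has fewer than $q$ roots, and for $n>1$ one expands in the last variable and applies induction to a nonzero coefficient. Since $\deg P=q-2<q$ and $P\equiv 0$ as a function, $P$ is the zero polynomial, contradicting its choice. Therefore
\[ |K|\ \geq\ \binom{q+n-3}{n-1}\ \geq\ C_n\cdot q^{n-1}, \]
with $C_n$ of order $1/(n-1)!$ for large $q$, and adjusted for the finitely many small values of $q$ (e.g.\ $q=2$, where $q-2$ is not positive) by brute force so that the resulting constant still depends only on $n$.

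The main conceptual point—really the only non-routine step—is the observation that homogeneity of $P$ makes the top coefficient of $P(y+ax)$ in $a$ equal to $P(x)$, so the Kakeya hypothesis on lines gets converted into pointwise vanishing of $P$ on $\F^n$ in every \emph{direction}. Once that linkage is in hand, everything else is dimension counting plus the standard Schwartz--Zippel-type nonvanishing fact.
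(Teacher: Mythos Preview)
Your proof is correct and follows essentially the same polynomial-method argument as the paper: the paper derives Theorem~\ref{thm-Kakeya} from the more general Theorem~\ref{thm-Kakeya-Strong} (set $\delta=\gamma=1$, $d=q-2$), whose proof finds a nonzero homogeneous degree-$d$ polynomial vanishing on $K$, uses homogeneity together with the line structure to force vanishing on all of $\F^n$, and concludes via Schwartz--Zippel. The only cosmetic difference is that the paper rescales points of $K$ into the cone $K'=\{c\cdot x: x\in K,\, c\in\F\}$ and restricts $g$ to the line $\{b\cdot z + y : b\in\F\}$, whereas you read off the top $a^{q-2}$-coefficient of $P(y+ax)$ directly---these are the same computation.
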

The result of Theorem~\ref{thm-Kakeya} can be made into an even
better bound using the simple observation that a product of Kakeya
sets is also a Kakeya set.
\begin{cor}\label{cor-Kakeya}
For every integer $n$ and every $\eps> 0$ there exists a constant
$C_{n,\eps}$, depending only on $n$ and $\eps$ such that any
Kakeya set $K \subset \F^n$ satisfies
\[ |K| \geq C_{n,\eps} \cdot q^{n-\eps}, \]
\end{cor}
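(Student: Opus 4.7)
The plan is to boost Theorem~\ref{thm-Kakeya} via a tensoring trick: by taking Cartesian powers of a Kakeya set, we can trade the additive $-1$ in the exponent for an arbitrarily small $-\eps$, at the cost of worsening the dimension-dependent constant.

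First I would verify the key observation that if $K \subset \F^n$ is a Kakeya set, then for any positive integer $m$ the Cartesian power $K^m \subset \F^{nm}$ is again a Kakeya set. Given a direction $x = (x_1,\ldots,x_m) \in \F^{nm}$ with each $x_i \in \F^n$, the Kakeya property of $K$ yields points $y_i \in \F^n$ with $y_i + a x_i \in K$ for every $a \in \F$. Then $y = (y_1,\ldots,y_m)$ satisfies $y + a x \in K^m$ for every $a \in \F$, so $L_{y,x} \subset K^m$.

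Next I would apply Theorem~\ref{thm-Kakeya} to $K^m$ inside $\F^{nm}$, obtaining
\[ |K|^m \;=\; |K^m| \;\geq\; C_{nm} \cdot q^{nm - 1}, \]
and then take $m$-th roots to get
\[ |K| \;\geq\; C_{nm}^{1/m} \cdot q^{\,n - 1/m}. \]
Given $\eps > 0$, I would choose $m = \lceil 1/\eps \rceil$, so that $n - 1/m \geq n - \eps$, and set $C_{n,\eps} := C_{nm}^{1/m}$, which depends only on $n$ and $\eps$. This yields $|K| \geq C_{n,\eps} \cdot q^{n-\eps}$, as required.

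There is essentially no obstacle here; the only points to be careful about are (i) that $m$ depends on $\eps$ but not on $q$ or on the specific Kakeya set $K$, so the constant $C_{n,\eps}$ is legitimately independent of $q$ and $K$, and (ii) that the Kakeya definition in the paper allows the line basepoint $y$ to depend on the direction $x$, so the construction of $y = (y_1,\ldots,y_m)$ in the product argument is unrestricted.
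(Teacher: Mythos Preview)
Your proposal is correct and follows essentially the same approach as the paper: tensor $K$ with itself $m$ times, apply Theorem~\ref{thm-Kakeya} to $K^m \subset \F^{nm}$, and take $m$-th roots to convert the $-1$ in the exponent into $-1/m$. The paper's proof is terser (it simply asserts that a product of Kakeya sets is Kakeya and does not spell out the choice of $m$ in terms of $\eps$), but the argument is identical.
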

\begin{proof}
Observe that, for every integer $r>0$, the Cartesian product $K^r
\subset \F^{n\cdot r}$ is also a Kakeya set. Using
Theorem~\ref{thm-Kakeya} on this set gives $$|K|^r \geq C_{n\cdot
r} \cdot q^{n\cdot r - 1},$$ which translates into a bound of
$C_{n,r} \cdot q^{n - 1/r}$ on the size of $K$.
\end{proof}

We derive Theorem~\ref{thm-Kakeya} from a stronger theorem that
gives a bound on the size of sets that contain only `many' points
on `many' lines. Before stating the theorem we formally define
these sets.
\begin{define}[ $\bf{ (\delta,\gamma)}$-Kakeya
Set] A set $K \subset \F^n$ is a $(\delta,\gamma)$-{\sf Kakeya
Set} if there exists a set $\cL \subset \F^n$ of size at least
$\delta \cdot q^n$ such that for every $x \in \cL$ there is a line
in direction $x$ that intersects $K$ in at least $\gamma \cdot q$
points.
\end{define}
The next theorem, proven in Section~\ref{sec-Kakeya-Proof}, gives
a lower bound on the size of $(\delta,\gamma)$-Kakeya sets.
Theorem~\ref{thm-Kakeya} will follow by setting $\delta=\gamma=1$.
\begin{THM}\label{thm-Kakeya-Strong}
Let $K \subset \F^n$ be a $(\delta,\gamma)$-Kakeya Set. Then
\[ |K| \geq { d + n-1 \choose n-1 }, \]
where $$d = \left\lfloor q \cdot \min\{ \delta , \gamma \}
\right\rfloor - 2.$$
\end{THM}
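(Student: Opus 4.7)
The plan is to execute the polynomial method sketched just before Theorem~\ref{thm-Kakeya}, adapted to the weaker $(\delta,\gamma)$-hypothesis. Suppose for contradiction that $|K| < \binom{d+n-1}{n-1}$. The binomial coefficient $\binom{d+n-1}{n-1}$ is precisely the dimension over $\F$ of the space $H_d$ of homogeneous polynomials of degree $d$ in $\F[x_1,\dots,x_n]$, so the evaluation map $H_d \to \F^{|K|}$ given by $P\mapsto (P(z))_{z\in K}$ has a nontrivial kernel. I would pick a nonzero $P\in H_d$ vanishing on every point of $K$ and aim to show $P\equiv 0$, contradicting $P\neq 0$.

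Next I would upgrade the vanishing of $P$ from $K$ to all of $\cL$. For each $x\in\cL$, choose $y\in\F^n$ with $|L_{y,x}\cap K|\ge \gamma q$ and consider the univariate polynomial $f(a):=P(y+ax)\in\F[a]$. Since $P$ is homogeneous of degree $d$, $f$ has degree at most $d$, and a direct monomial-by-monomial expansion shows that the coefficient of $a^d$ in $f$ equals $P(x)$. By construction $f$ vanishes at every $a$ such that $y+ax\in K$, hence at at least $\gamma q$ distinct values, and the definition of $d$ gives $d+2\le q\min\{\delta,\gamma\}\le q\gamma$. Thus $f$ has strictly more roots than its degree, so $f\equiv 0$ and in particular its leading coefficient $P(x)$ vanishes.

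The final step is a Schwartz--Zippel estimate. A nonzero polynomial in $\F[x_1,\dots,x_n]$ of total degree $d$ has at most $d\cdot q^{n-1}$ zeros in $\F^n$. But $P$ is nonzero, has total degree $d$, and by the previous step vanishes on the set $\cL$ of size at least $\delta q^n$. This would force $\delta q^n\le d q^{n-1}$, i.e.\ $\delta q\le d$, whereas the definition of $d$ gives $d\le q\min\{\delta,\gamma\}-2 < q\delta$. This contradiction proves the theorem.

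The main obstacle, and the only place where the exact choice of $d$ matters, is the triple balancing act around the degree. The quantity $d=\lfloor q\min\{\delta,\gamma\}\rfloor-2$ must be large enough for the dimension $\binom{d+n-1}{n-1}$ to give a nontrivial bound on $|K|$, yet small enough that (i) each univariate restriction $P(y+ax)$ of degree at most $d$ is killed by its $\gamma q$ roots, and (ii) a nonzero homogeneous polynomial of degree $d$ cannot vanish on the $\delta q^n$ points of $\cL$. The ``$-2$'' in the definition of $d$ is exactly the slack that makes both inequalities strict at the same time, so the proof reduces to just verifying these two inequalities cleanly.
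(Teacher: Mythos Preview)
Your argument is correct, and its overall architecture---find a nonzero homogeneous polynomial of degree $d$ vanishing on $K$, propagate the vanishing to all of $\cL$, then invoke Schwartz--Zippel---matches the paper's. The one genuine difference is the middle step. The paper does not use the leading-coefficient observation; instead it forms the cone $K'=\{c\cdot x:x\in K,\ c\in\F\}$, notes that a homogeneous $g$ vanishes on all of $K'$, and then for each direction $y\in\cL$ rescales the points $z+a_i y\in K$ by $a_i^{-1}$ to obtain $d+1$ distinct points $a_i^{-1}z+y\in K'$ lying on the line through $y$ with direction $z$; restricting $g$ to that line kills it and hence $g(y)=0$. Your route---expand $P(y+ax)$ and read off that the coefficient of $a^d$ is $P(x)$---is shorter, avoids the cone and the case analysis on $a_i=0$ and $z=0$, and is exactly the mechanism behind the Alon--Tao improvement recorded later in the paper (there applied to the top homogeneous part of a not-necessarily-homogeneous polynomial). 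What the paper's version buys is a slightly more geometric picture: it makes explicit that homogeneity lets one replace $K$ by its cone, which some readers may find conceptually illuminating even though it costs a few extra lines.
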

Notice that, in order to get a bound of $\approx q^{n(1-\eps)}$ on
the size of $K$, Theorem~\ref{thm-Kakeya-Strong} allows $\delta$
and $\gamma$ to be as small as $q^{-\eps}$.

\subsection{Improving the bound to $\approx q^n$}
Following the initial publication of this work, Noga Alon and
Terence Tao \cite{AT08} observed that it is possible to turn the
proof of Theorem~\ref{thm-Kakeya} into a proof that gives a bound
of $C_n \cdot q^n$, thus achieving an optimal bound. We give below
a proof of this argument (the same argument gives an improvement
also for Theorem~\ref{thm-Kakeya-Strong}).
\begin{THM}
Let $K \subset \F^n$ be a Kakeya set. Then
\[ |K| \geq C_n \cdot q^{n}, \]
where $C_n$ depends only on $n$.
\end{THM}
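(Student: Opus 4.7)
The plan is to strengthen the polynomial method from Theorem~\ref{thm-Kakeya} by replacing the space of homogeneous polynomials of degree $q-2$ with the full space of polynomials of total degree at most $q-1$. This enlarges the ambient dimension from roughly $q^{n-1}$ to $\binom{n+q-1}{n} \geq q^n/n!$, which is exactly what is needed to reach a $C_n \cdot q^n$ bound. The price of dropping homogeneity is that, after extracting a polynomial vanishing on $K$, we can no longer identify it with its top-degree piece; we will have to recover that piece line by line.

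Concretely, I would suppose for contradiction that $|K| < \binom{n+q-1}{n}$. A linear algebra argument (a homogeneous system with more unknowns than equations has a nontrivial solution) then produces a nonzero polynomial $p \in \F[x_1,\ldots,x_n]$ of some total degree $d$ with $0 \leq d \leq q-1$ that vanishes on every point of $K$. Write $p = p_d + r$, where $p_d$ is the homogeneous component of degree $d$ and $\deg(r) < d$. For each direction $x \in \F^n$, the Kakeya property gives a line $L_{y,x} = \{y + ax : a \in \F\} \subset K$; the restriction $a \mapsto p(y+ax)$ is a univariate polynomial in $a$ of degree at most $d \leq q-1$ that vanishes on all $q$ elements of $\F$, hence is identically zero as a polynomial. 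Its coefficient of $a^d$ is exactly $p_d(x)$, so $p_d$ vanishes at every point of $\F^n$.

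The last step is to argue that $p_d$ is therefore the zero polynomial. Since $p_d$ has total degree $\leq q-1$, each variable appears in $p_d$ with individual degree $\leq q-1$, so $p_d$ lies in the $q^n$-dimensional space of \emph{reduced} polynomials on which evaluation to $\F^{\F^n}$ is a bijection. Hence $p_d = 0$, contradicting $\deg(p) = d$. This proves $|K| \geq \binom{n+q-1}{n} \geq q^n/n!$, giving the theorem with $C_n = 1/n!$. The conceptual crux that I expect to require the most thought is the middle step: the observation that, by examining the leading $a$-coefficient of $p(y+ax)$ along a Kakeya line, one can still extract information about the top-degree homogeneous part of $p$ even though $p$ itself was not assumed homogeneous. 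Once this is in place, the dimension count and the standard uniqueness of reduced polynomial representations complete the argument.
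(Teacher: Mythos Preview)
Your argument is correct and follows essentially the same approach as the paper: interpolate a nonzero polynomial of degree at most $q-1$ vanishing on $K$, then use the Kakeya lines to show that its top homogeneous part vanishes identically, yielding a contradiction. Your version is a mild streamlining of the paper's proof---by working with the actual degree $d = \deg p$ you avoid the paper's downward induction peeling off $P_{q-1}, P_{q-2}, \ldots$ one at a time, and your threshold $\binom{n+q-1}{n}$ is slightly sharper than the paper's $\binom{q+n-2}{n}$---but the underlying idea is identical.
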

\begin{proof}
Indeed, suppose this is false and let $K \subset F^n$ be a Kakeya
set of size less than ${{q+n-2} \choose {n}}$. Then there is a
nonzero polynomial of degree at most $q-1$ $P \in \F[x_1, \ldots
,x_n]$ so that $P(x)=0$ for all $x \in K$. Write
$P=\sum_{i=0}^{q-1}P_i$, where $P_i$ is the homogeneous part of
degree  $i$ of $P$. Fix $ y \in \F^n$. Then there is a $b \in
\F^n$ so that $P(b+ay)=0$ for all $a \in F$. For fixed $b$ and $y$
this is a polynomial of degree $q-1$ in $a$ which vanishes for all
$a \in \F$. It is thus identically zero, and hence all its
coefficients are zero. In particular, the coefficient of $a^{q-1}$
is zero, but it is easy to see that this is exactly $P_{q-1}(y)$.
Since $y$ was arbitrary it follows that the polynomial $P_{q-1}$
is identically zero. Therefore $P=\sum_{i=0}^{q-2} P_i$ and
repeating this argument we conclude that the polynomials
$P_{q-2},P_{q-3}, \ldots ,P_1$ are all identically zero. Hence $P$
is the constant term $P_0$, which has to be zero, as $P$ vanishes
at some points (including all points of $K$). This is a
contradiction, completing the proof.
\end{proof}

\section{Proof of Theorem~\ref{thm-Kakeya-Strong}}\label{sec-Kakeya-Proof}
We will use the following bound on the number of zeros of a degree
$d$ polynomial proven by Schwartz and Zippel
\cite{Schwartz80,Zippel79}.
\begin{lem}[Schwartz-Zippel]\label{lem-SZ}
Let $f \in \F[x_1,\ldots,x_n]$ be a non zero polynomial with
$\deg(f) \leq d$. Then
\[ |\{ x \in \F^n \,| f(x)=0 \}| \leq d \cdot q^{n-1}. \]
\end{lem}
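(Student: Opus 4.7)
The plan is to prove Lemma~\ref{lem-SZ} by induction on $n$, the number of variables, using the classical single-variable fact as the base case and a leading-coefficient argument for the inductive step.

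For the base case $n=1$, a nonzero polynomial of degree at most $d$ over the field $\F$ has at most $d$ roots; this is a standard consequence of the division algorithm (each root contributes a linear factor). The bound $d \cdot q^{n-1}$ reduces to $d$, which is exactly this classical statement, so the base case is immediate.

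For the inductive step, I would assume the bound for $n-1$ variables and consider a nonzero $f \in \F[x_1,\ldots,x_n]$ with $\deg(f) \leq d$. I would write $f$ as a polynomial in the last variable with coefficients in $\F[x_1,\ldots,x_{n-1}]$, namely
\[ f(x_1,\ldots,x_n) \;=\; \sum_{i=0}^{k} g_i(x_1,\ldots,x_{n-1}) \cdot x_n^{i}, \]
where $k$ is chosen so that $g_k \not\equiv 0$, hence $k \leq d$ and $\deg(g_k) \leq d-k$. The counting then splits according to the value of $g_k$ at the projection $(a_1,\ldots,a_{n-1})$ of a potential zero. If $g_k(a_1,\ldots,a_{n-1}) = 0$, then by the induction hypothesis applied to $g_k$ there are at most $(d-k)\cdot q^{n-2}$ such projections, and each contributes at most $q$ zeros of $f$ (one for each value of $x_n$), giving at most $(d-k)\cdot q^{n-1}$ zeros of $f$ in total. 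If $g_k(a_1,\ldots,a_{n-1}) \neq 0$, then $f(a_1,\ldots,a_{n-1},x_n)$ is a nonzero univariate polynomial of degree exactly $k$ in $x_n$, so it has at most $k$ roots; summing over at most $q^{n-1}$ projections yields at most $k\cdot q^{n-1}$ zeros.

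Adding the two contributions gives $(d-k)\cdot q^{n-1} + k\cdot q^{n-1} = d\cdot q^{n-1}$, completing the induction. There is no real obstacle here: the only point that requires a little care is the choice of $k$ as the largest index with $g_k \not\equiv 0$, which ensures both that $g_k$ is a legitimate nonzero polynomial to which the inductive hypothesis applies and that $f(a_1,\ldots,a_{n-1},x_n)$ is genuinely nonzero whenever $g_k(a_1,\ldots,a_{n-1}) \neq 0$; the degree bookkeeping $\deg(g_k) \leq d-k$ then makes the two case estimates combine exactly into $d \cdot q^{n-1}$.
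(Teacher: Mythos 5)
Your proof is correct. Note, however, that the paper does not actually prove Lemma~\ref{lem-SZ}; it simply quotes it as a known result of Schwartz and Zippel, so there is no proof in the paper to compare against. Your argument---induction on $n$, splitting the zeros according to whether the leading coefficient $g_k$ (as a polynomial in $x_n$) vanishes at the projection, and using $\deg(g_k)\leq d-k$ to make the two contributions $(d-k)\cdot q^{n-1}$ and $k\cdot q^{n-1}$ sum to exactly $d\cdot q^{n-1}$---is the standard proof of this sharp form of the lemma, and all the edge cases (e.g.\ $k=0$ or $k=d$) are handled correctly by your setup.
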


\begin{proof}[Proof of Theorem~\ref{thm-Kakeya-Strong}]
Suppose in contradiction that $$|K| < { d + n-1 \choose n-1 }.$$
Then, the number of monomials in $\F[x_1,\ldots,x_n]$ of degree
$d$ is larger than the size of $K$. Therefore, there exists a
homogenous degree $d$ polynomial $g \in \F[x_1,\ldots,x_n]$ such
that $g$ is not the zero polynomial and
\[ \forall x \in K, \_ g(x)=0 \]
(this follows by solving a system of linear equations, one for
each point in $K$, where the unknowns are the coefficients of
$g$). Our plan is to show that $g$ has too many zeros and
therefore must be identically zero (which is a contradiction).

Consider the set
\[ K' \triangleq \{ c \cdot x \, | \, x \in K, c \in \F \} \]
containing all lines that pass through zero and intersect $K$ at
some point. Since $g$ is homogenous we have $$g(c \cdot x) = c^d
\cdot g(x)$$ and so
\[ \forall x \in K', \_ g(x)=0. \]

Since $K$ is a $(\delta,\gamma)$-Kakeya set, there exists a set
$\cL \subset \F^n$ of size at least $\delta \cdot q^n$ such that
for every $y \in \cL$ there exists a line with direction $y$ that
intersects $K$ in at least $\gamma \cdot q$ points.

\begin{claim}
For every $y \in \cL$ we have $g(y)=0$.
\end{claim}
\begin{proof}
Let $y \in \cL$ be some non zero vector (if $y=0$ then $g(y)=0$
since $g$ is homogenous). Then, there exists a point $z \in \F^n$
such that the line
\[ L_{z,y} = \{ z + a \cdot y | a \in \F \} \]
intersects $K$ in at least $\gamma \cdot q$ points. Therefore,
since $d+2 \leq \gamma \cdot q$, there exist $d+2$ distinct field
elements $a_1,\ldots,a_{d+2} \in \F$ such that $$\forall i \in
[d+2], \,\, z + a_i \cdot y \in K.$$ If there exists $i$ such that
$a_i=0$ we can remove this element from our set of $d+2$ points
and so we are left with at least $d+1$ distinct {\em non-zero}
field elements ( w.l.o.g $a_1,\ldots,a_{d+1}$) such that
\[\forall i \in
[d+1], \_ z + a_i \cdot y \in K \_ \text{and} \,\, a_i \neq 0\]
Let $b_i = a_i^{-1}$ where $i \in [d+1]$. The $d+1$ points
\[ w_i \triangleq b_i \cdot z + y , \,\, i \in [d+1] \]
are all in the set $K'$ and so
\[ g(w_i) = 0 , \,\, i \in [d+1]. \]
If $z=0$ then we have $w_i = y$ for all $i \in [d+1]$ and so
$g(y)=0$. We can thus assume that $z \neq 0$ which implies that
$w_1,\ldots,w_{d+1}$ are $d+1$ {\em distinct} points belonging to
the same line (the line through $y$ with direction $z$). The
restriction of $g(x)$ to this line is a degree $\leq d$ univariate
polynomial and so, since it has $d+1$ zeros (at the points $w_i$),
it must be zero on the entire line. We therefore get that $g(y)=0$
and so the claim is proven.
\end{proof}

We now get a contradiction since $$d/q < \delta$$ and, using
Lemma~\ref{lem-SZ}, a polynomial of degree $d$ can be zero on at
most a $d/q$ fraction of $\F^n$.
\end{proof}

\section{Acknowledgments}
I am  grateful  to Avi Wigderson for encouraging me to work on
this problem and for many helpful discussions. I thank my advisers
Ran Raz and Amir shpilka for their continuous support. I thank
Noga Alon, Richard Oberlin and Terrence Tao for pointing out the
improvements to Theorem~\ref{thm-Kakeya}.




\begin{thebibliography}{BKT04}

\bibitem[AT08]{AT08}
N.~Alon and T.~Tao.
\newblock Private communication.
\newblock 2008.

\bibitem[BKT04]{BKT04}
J.~Bourgain, N.~Katz, and T.~Tao.
\newblock A sum-product estimate in finite fields, and applications.
\newblock {\em GAFA}, 14(1):27--57, 2004.

\bibitem[Bou99]{Bou99}
J.~Bourgain.
\newblock On the dimension of {K}akeya sets and related maximal inequalities.
\newblock {\em Geom. Funct. Anal.}, (9):256--282, 1999.

\bibitem[Bou00]{Bou00}
J.~Bourgain.
\newblock Harmonic analysis and combinatorics: How much may they contribute to
  each other?
\newblock {\em IMU/Amer. Math. Soc.}, pages 13--32, 2000.

\bibitem[Dav71]{Davies71}
R.~Davies.
\newblock Some remarks on the {K}akeya problem.
\newblock {\em Proc. Cambridge Philos. Soc.}, (69):417--421, 1971.

\bibitem[KT99]{KT99}
N.~Katz and T.~Tao.
\newblock Bounds on arithmetic projections, and applications to the {K}akeya
  conjecture.
\newblock {\em Math. Res. Letters}, 6:625--630, 1999.

\bibitem[MT04]{MT04}
G.~Mockenhaupt and T.~Tao.
\newblock {Restriction and {K}akeya phenomena for finite fields}.
\newblock {\em Duke Math. J.}, 121:35--74, 2004.

\bibitem[Rog01]{Rogers01}
K.M Rogers.
\newblock The finite field {K}akeya problem.
\newblock {\em Amer. Math. Monthly 108}, (8):756--759, 2001.

\bibitem[Sch80]{Schwartz80}
J.~T. Schwartz.
\newblock Fast probabilistic algorithms for verification of polynomial
  identities.
\newblock {\em J. ACM}, 27(4):701--717, 1980.

\bibitem[Tao01]{Tao01}
T.~Tao.
\newblock From rotating needles to stability of waves: emerging connections
  between combinatorics, analysis, and pde.
\newblock {\em Notices Amer. Math. Soc.}, 48(3):294--303, 2001.

\bibitem[Tao08]{Tao08}
T.~Tao.
\newblock A new bound for finite field besicovitch sets in four dimensions.
\newblock {\em Pacific J. Math (to appear)}, 2008.

\bibitem[Wol99]{Wolff99}
T.~Wolff.
\newblock Recent work connected with the {K}akeya problem.
\newblock {\em Prospects in mathematics (Princeton, NJ, 1996).}
\newblock pages 129--162, 1999.

\bibitem[Zip79]{Zippel79}
R.~Zippel.
\newblock Probabilistic algorithms for sparse polynomials.
\newblock In {\em Proceedings of the International Symposiumon on Symbolic and
  Algebraic Computation}, pages 216--226. Springer-Verlag, 1979.

\end{thebibliography}
\end{document}